\newtheorem{theorem}{Theorem}[section]
\numberwithin{equation}{section}
\newcommand{\cycle}[2]{\genfrac{[}{]}{0pt}{}{#1}{#2}}
\title{On Two Types of Harmonic Number Identities}
\author{M.J. Kronenburg}
\begin{document}

\maketitle

\begin{abstract}
Two types of finite series of products of harmonic numbers
involving nonnegative integer powers are evaluated,
also yielding two other important harmonic number identities.
The recursion formulas for these sums are derived,
which are easily translated into a computer program.
\end{abstract}

\noindent
\textbf{Keywords}: harmonic number.\\
\textbf{MSC 2010}: 11B99

\section{Definitions and Basic Identities}

The generalized harmonic numbers used in this paper are:
\begin{equation}
  H_n^{(m)} = \sum_{k=1}^n \frac{1}{k^m}
\end{equation}
from which follows that $H_0^{(m)}=0$.
The traditional harmonic numbers are:
\begin{equation}
 H_n=H_n^{(1)}
\end{equation}
A well known identity is \cite{GKP94,K97,SLL89,S90}:
\begin{equation}\label{sum1okhk}
 \sum_{k=1}^n \frac{1}{k} H_k = \frac{1}{2} ( H_n^2 + H_n^{(2)} )
\end{equation}
and \cite{K97,K11}:
\begin{equation}\label{sum1okp1hk}
 \sum_{k=0}^n \frac{1}{k+1}H_k = \frac{1}{2} ( H_{n+1}^2 - H_{n+1}^{(2)} )
\end{equation}
Let $B_n^+$ be the Bernoulli number with $B_1^+=1/2$ instead of $B_1=-1/2$:
\begin{equation}
 B_n^+ = (-1)^n B_n = B_n + \delta_{n,1}
\end{equation}
Then for nonnegative integer $p$ \cite{K11}:
\begin{equation}\label{faulhaber}
 H_{n}^{(-p)} = \frac{1}{p+1}\sum_{k=1}^{p+1}\binom{p+1}{k}B_{p-k+1}^+ n^k
\end{equation}
and for nonnegative integer $p$ \cite{K11}:
\begin{equation}\label{harmsumf}
\begin{split}
 & \sum_{k=0}^n k^p H_k^{(m)} = F(n,p,m) \\
 & = H_n^{(-p)} H_n^{(m)} + H_n^{(m-p)}
  - \frac{1}{p+1} \sum_{k=1}^{p+1} \binom{p+1}{k} B_{p-k+1}^+ H_n^{(m-k)}
\end{split}
\end{equation}

When $a\leq b$ are two integers and $\{x_k\}$ and $\{y_k\}$ are two sequences of complex numbers,
and $\{s_k\}$ the sequence of complex numbers defined by:
\begin{equation}
 s_k = \sum_{i=a}^k x_i
\end{equation}
then there is the following summation by parts formula \cite{K11}:
\begin{equation}\label{sumparts}
 \sum_{k=a}^{b-1} x_k y_k = s_{b-1} y_b - \sum_{k=a}^{b-1} s_k ( y_{k+1} - y_k )
\end{equation}

\section{First Type of Harmonic Number Identities}

\begin{theorem}
For nonnegative integer $n$:
\begin{equation}\label{harmsumconv}
 \sum_{k=0}^n H_k H_{n-k} = (n+1)[(H_{n+1}-1)^2 - H_{n+1}^{(2)} + 1]
\end{equation}
\end{theorem}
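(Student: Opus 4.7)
The plan is to reduce the convolution $S_n=\sum_{k=0}^{n}H_kH_{n-k}$ to two elementary single sums that can then be evaluated by the identities collected in Section~1. Writing $H_kH_{n-k}=\sum_{i=1}^{k}\sum_{j=1}^{n-k}\frac{1}{ij}$ and swapping the order of summation, for each pair $(i,j)$ with $i,j\ge 1$ the admissible index $k$ ranges over $i\le k\le n-j$, which contributes a factor $n+1-i-j$ and forces $i+j\le n$. Hence
\begin{equation*}
S_n=\sum_{\substack{i,j\ge 1\\ i+j\le n}}\frac{n+1-i-j}{ij}.
\end{equation*}

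Next I group terms by $m=i+j$ and compute the inner sum by partial fractions,
\begin{equation*}
\sum_{i=1}^{m-1}\frac{1}{i(m-i)}=\frac{1}{m}\sum_{i=1}^{m-1}\Bigl(\frac{1}{i}+\frac{1}{m-i}\Bigr)=\frac{2H_{m-1}}{m}.
\end{equation*}
After reindexing $k=m-1$ and using $H_0=0$ to extend the range down to $k=0$, this gives $S_n=2\sum_{k=0}^{n-1}\frac{(n-k)H_k}{k+1}$. The algebraic identity $\frac{n-k}{k+1}=\frac{n+1}{k+1}-1$ now splits the result into
\begin{equation*}
S_n=2(n+1)\sum_{k=0}^{n-1}\frac{H_k}{k+1}-2\sum_{k=0}^{n-1}H_k,
\end{equation*}
so the problem reduces to two sums already handled in Section~1.

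The first sum is identity~(\ref{sum1okp1hk}) with $n$ replaced by $n-1$, yielding $\frac{1}{2}(H_n^2-H_n^{(2)})$. The second sum is the case $p=0$, $m=1$ of~(\ref{harmsumf}); evaluating the single remaining Bernoulli term gives $\sum_{k=0}^{n-1}H_k=nH_{n-1}-(n-1)=nH_n-n$. Substituting these values produces
\begin{equation*}
S_n=(n+1)\bigl(H_n^2-H_n^{(2)}\bigr)-2nH_n+2n.
\end{equation*}

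The final step is cosmetic but is the most likely place for a sign or index slip: I would verify that this expression equals $(n+1)\bigl[(H_{n+1}-1)^2-H_{n+1}^{(2)}+1\bigr]$ by substituting $H_{n+1}=H_n+\frac{1}{n+1}$ and $H_{n+1}^{(2)}=H_n^{(2)}+\frac{1}{(n+1)^2}$ on the right and expanding. The two $\frac{1}{n+1}$ terms arising from $(n+1)H_{n+1}^2$ and $(n+1)H_{n+1}^{(2)}$ cancel, leaving a cross term $2H_n$ that combines with the $-2(n+1)H_{n+1}$ contribution to give $-2nH_n$, while the constants reassemble to $2n$. The main potential obstacle is this last piece of algebraic bookkeeping rather than any deeper conceptual issue.
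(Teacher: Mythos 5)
Your proof is correct, but it follows a genuinely different route from the paper's proof of this theorem. The paper proves (\ref{harmsumconv}) with generating functions: it writes the convolution as $[z^n]\frac{1}{(1-z)^2}\bigl(\ln\frac{1}{1-z}\bigr)^2$, invokes the Stirling-number expansion $\cycle{n}{2}=(n-1)!\,H_{n-1}$, and multiplies the two known series to reach the intermediate form $2(n+1)\sum_{k=0}^{n-1}\frac{1}{k+1}H_k-2\sum_{k=0}^{n-1}H_k$, i.e.\ exactly your reduction, which is then finished with (\ref{sum1okp1hk}), the $p=0$, $m=1$ case of (\ref{harmsumf}), and the shift $H_n^{(m)}=H_{n+1}^{(m)}-1/(n+1)^m$. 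You obtain the same intermediate expression (the paper's (\ref{harmsumconvx})) by a purely elementary argument: expanding $H_kH_{n-k}$ as a triple sum, counting the admissible $k$ for each pair $(i,j)$, grouping by $m=i+j$, and using the partial-fraction evaluation $\sum_{i=1}^{m-1}\frac{1}{i(m-i)}=\frac{2H_{m-1}}{m}$; from there your two single sums and the final change from $H_n$, $H_n^{(2)}$ to $H_{n+1}$, $H_{n+1}^{(2)}$ are exactly the paper's closing algebra, and your bookkeeping of the cancellations checks out. What each approach buys: yours is self-contained and avoids citing the generating functions and the Stirling-number identity, while the paper's is shorter once those standard expansions are granted and fits naturally into its generating-function framework. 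It is worth noting that the paper itself performs a closely related interchange-of-summation computation immediately after the theorem (its ``direct evaluation'' of $\sum_{k=1}^n H_kH_{n-k}$), but there it sums over one index first to get $\sum_i \frac{1}{i}H_{n-i}$ and uses the result, together with the already-proved theorem, to derive $\sum_{k=1}^n\frac{1}{k}H_{n-k}=H_n^2-H_n^{(2)}$, rather than as an independent proof; your grouping by $i+j$ with partial fractions turns that style of argument into a complete stand-alone proof of the theorem.
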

\begin{proof}
With generating functions: 
let $[z^n]f(z)$ be the coefficient of $z^n$ in the power series expansion of $f(z)$.
Then \cite{GKP94}:
\begin{equation}
 H_n = [z^n] \frac{1}{1-z} \ln(\frac{1}{1-z})
\end{equation}
The generating function of a convolution of two series is the product
of their generating functions:
\begin{equation}
 \sum_{k=0}^n H_k H_{n-k} = [z^n] \frac{1}{(1-z)^2} (\ln(\frac{1}{1-z}) )^2
\end{equation}
These generating functions are known \cite{GKP94}:
\begin{equation}
 [z^n] \frac{1}{(1-z)^2} = n+1
\end{equation}
\begin{equation}
 [z^n] (\ln(\frac{1}{1-z}))^2 = 2 \cycle{n}{2} \frac{1}{n!}
\end{equation}
where $\cycle{a}{b}$ is the Stirling number of the first kind,
which in this case evaluates to \cite{GKP94}:
\begin{equation}
 \cycle{n}{2} =
 \begin{cases}
   0 & \text{if $n=0$} \\
  (n-1)! H_{n-1} & \text{if $n>0$} \\
 \end{cases}
\end{equation}
The product of these two generating functions results in another convolution:
\begin{equation}
 [z^n] \frac{1}{(1-z)^2}(\ln(\frac{1}{1-z}))^2 = 2\sum_{k=1}^n(n-k+1)\frac{1}{k}H_{k-1}
\end{equation}
which using (\ref{harmsumf}) with $p=0$ and $m=1$ and (\ref{sum1okp1hk}) results in:
\begin{equation}\label{harmsumconvx}
\begin{split}
 & 2(n+1)\sum_{k=0}^{n-1}\frac{1}{k+1}H_k - 2\sum_{k=0}^{n-1}H_k \\
 & = (n+1)[H_n^2-H_n^{(2)}] - 2[n H_n - n] \\
\end{split}
\end{equation}
With $H_n^{(m)}=H_{n+1}^{(m)}-1/(n+1)^m$ this results in the theorem.
\end{proof}
Using different methods this theorem was already proved in literature \cite{S90,WGW12}.
Now evaluating the sum directly yields:
\begin{equation}
\begin{split}
 \sum_{k=1}^n H_kH_{n-k} & = \sum_{k=1}^{n-1}\sum_{i=1}^k\frac{1}{i}\sum_{j=1}^{n-k}\frac{1}{j} \\
 & = \sum_{i=1}^{n-1}\frac{1}{i}\sum_{k=i}^{n-1}\sum_{j=1}^{n-k}\frac{1}{j} \\
 & = \sum_{i=1}^{n-1}\frac{1}{i}\sum_{j=1}^{n-i}\frac{1}{j}\sum_{k=i}^{n-j}1 \\
 & = \sum_{i=1}^{n-1}\frac{1}{i}\sum_{j=1}^{n-i}\frac{1}{j}(n-i-j+1) \\
 & = (n+1)\sum_{i=1}^{n-1}\frac{1}{i}H_{n-i} - \sum_{i=1}^{n-1}H_{n-i} - \sum_{i=1}^{n-1}\frac{n-i}{i} \\
 & = (n+1)\sum_{i=1}^{n-1}\frac{1}{i}H_{n-i} - \sum_{i=1}^{n-1}H_i - n H_{n-1} + (n-1)
\end{split}
\end{equation}
The second sum is evaluated using (\ref{harmsumf}) with $p=0$ and $m=1$,
and substituting the right side of (\ref{harmsumconvx}) results in the following identity:
\begin{equation}
 \sum_{k=1}^n\frac{1}{k}H_{n-k} = H_n^2 - H_n^{(2)}
\end{equation}
Evaluating the sum for $n+1$ and using $H_{n-k+1}=H_{n-k}+1/(n-k+1)$ yields:
\begin{equation}
\begin{split}
 \sum_{k=0}^{n+1} H_kH_{n+1-k} & = \sum_{k=0}^n H_kH_{n-k+1} \\
 & = \sum_{k=0}^n H_kH_{n-k} + \sum_{k=0}^n \frac{1}{n-k+1}H_k \\
 & = \sum_{k=0}^n H_kH_{n-k} + \sum_{k=0}^n \frac{1}{k+1} H_{n-k}
\end{split}
\end{equation}
Substituting (\ref{harmsumconv}) for $n+1$ and $n$ results in the following identity:
\begin{equation}\label{sum1okp1hnmk}
 \sum_{k=0}^n \frac{1}{k+1} H_{n-k} = H_{n+1}^2 - H_{n+1}^{(2)}
\end{equation}

Using summation by parts (\ref{sumparts}) with $x_k=H_{n-k}$ and $y_k=k^pH_k$,
the following recursion formula results:
\begin{equation}
\begin{split}
 \sum_{k=0}^n k^p H_k H_{n-k} & = \frac{1}{p+1} \{ (n+1)^p [ (n+1)H_n^2-(n-1)H_n-\frac{n}{n+1} ] \\
 & - (n+1)H_n \sum_{k=0}^n [ ((k+1)^p-k^p)H_k + (k+1)^{p-1} ] \\
 & + \sum_{k=0}^n [ (n-k)((k+1)^p-k^p)+pk^p ] H_k H_{n-k} \\
 & + \sum_{k=0}^n k [ (k+1)^p - k^p + (n-k+1)^{p-1} ] H_k \\
 & + \sum_{k=0}^n k(k+1)^{p-1} \}
\end{split}
\end{equation}
For $p=0$ this formula yields (\ref{harmsumconv}) using (\ref{sum1okp1hnmk}).
Letting $R(n,0)$ be (\ref{harmsumconv}) and using $F(n,p,m)$ from (\ref{harmsumf}) 
evaluation of this formula results in the following recursion formula for integer $p>0$:
\begin{equation}\label{harmrecurconv}
\begin{split}
 \sum_{k=0}^n k^p H_k H_{n-k} & = R(n,p) \\
 & = \frac{1}{p+1} \{ - n(n+1)^p H_{n+1} \\
 & + \sum_{k=0}^{p-1} \binom{p}{k} [ ( n - \frac{k}{p-k+1} ) R(n,k) \\
 & + ( 1 + (-1)^k \frac{p-k}{p} (n+1)^{p-k-1} ) F(n,k+1,1) \\
 & + \frac{p-k}{p} H_n^{(-k-1)} ] \}
\end{split}
\end{equation}
For a list of the resulting expressions up to $p=5$ see section (\ref{examples}) below.

\section{Second Type of Harmonic Number Identities}

\begin{theorem}
For nonnegative integer $n$:
\begin{equation}\label{harmsumsqr}
 \sum_{k=0}^n H_k^2 = (n+1)H_{n+1}^2 -(2n+3)H_{n+1} +2(n+1)
\end{equation}
\end{theorem}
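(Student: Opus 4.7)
The plan is to derive the identity by summation by parts using the formula (\ref{sumparts}), exactly in the spirit the paper has been developing, and then reduce the remaining linear sum of harmonic numbers via (\ref{harmsumf}).

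First I would apply (\ref{sumparts}) with $a=0$, $b=n+1$, $x_k = 1$ and $y_k = H_k^2$, so that $s_k = k+1$. The key computation is the telescoping difference
\begin{equation*}
 y_{k+1}-y_k = H_{k+1}^2 - H_k^2 = (H_{k+1}-H_k)(H_{k+1}+H_k) = \frac{2 H_k}{k+1} + \frac{1}{(k+1)^2},
\end{equation*}
which gives $(k+1)(y_{k+1}-y_k) = 2H_k + \frac{1}{k+1}$. Feeding this into (\ref{sumparts}) I obtain
\begin{equation*}
 \sum_{k=0}^n H_k^2 = (n+1)H_{n+1}^2 - 2\sum_{k=0}^n H_k - H_{n+1}.
\end{equation*}

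Next, I would evaluate $\sum_{k=0}^n H_k$ by invoking (\ref{harmsumf}) with $p=0$, $m=1$. Since $H_n^{(0)}=n$ and $B_0^+=1$, the formula collapses to $(n+1)H_n - n$, which after the shift $H_n = H_{n+1}-1/(n+1)$ becomes $(n+1)H_{n+1} - (n+1)$. Substituting this into the expression above and collecting the terms in $H_{n+1}$ yields exactly $(n+1)H_{n+1}^2 - (2n+3)H_{n+1} + 2(n+1)$, proving (\ref{harmsumsqr}).

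There is no real obstacle here beyond the bookkeeping in step one: the only place an error can creep in is the algebraic simplification of $H_{k+1}^2 - H_k^2$ and the subsequent rewriting of the index boundary so that the $(n+1)^2$ term lands on $H_{n+1}^2$ rather than $H_n^2$. Once that is handled correctly, the identity drops out as a one-line consequence of summation by parts plus the known closed form for $\sum_{k=0}^n H_k$.
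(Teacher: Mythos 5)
Your proof is correct, but it takes a genuinely different route from the paper's own proof of this theorem. The paper proves (\ref{harmsumsqr}) by expanding one factor $H_k$ as $\sum_{i=1}^k \tfrac{1}{i}$ and interchanging the order of summation, which reduces the sum to $\sum_{i=1}^n \tfrac{1}{i}\bigl(\sum_{k=1}^n H_k-\sum_{k=1}^{i-1}H_k\bigr)$ and then to the elementary evaluation of $\sum_{i=1}^n (H_i-1)$; summation by parts (\ref{sumparts}) appears in the paper only afterwards, for the general recursion, and there with the choice $x_k=H_k$, $y_k=k^pH_k$. You instead apply (\ref{sumparts}) with $x_k=1$, $y_k=H_k^2$, so that $s_k=k+1$ is trivial and all the work is carried by the telescoping difference $H_{k+1}^2-H_k^2=\tfrac{2H_k}{k+1}+\tfrac{1}{(k+1)^2}$, after which only the standard closed form $\sum_{k=0}^n H_k=(n+1)H_{n+1}-(n+1)$ (your correct specialization of (\ref{harmsumf}) with $p=0$, $m=1$) is needed; every step, including the boundary term $(n+1)H_{n+1}^2$ and the final collection of $H_{n+1}$ terms, checks out. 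What each approach buys: yours is slightly shorter and stays entirely within the toolkit the paper has already set up, and the same choice of $x_k$, $y_k$ with $y_k=k^pH_k^2$ would give an alternative derivation of the weighted sums; the paper's double-sum exchange avoids computing any differences of squares and parallels the direct evaluation it also performs for the convolution sums $\sum_k H_kH_{n-k}$ in the preceding section.
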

\begin{proof}
\begin{equation}
\begin{split}
 \sum_{k=0}^n H_k^2 & = \sum_{k=1}^n\sum_{i=1}^k \frac{1}{i} H_k \\
 & = \sum_{i=1}^n \frac{1}{i} \sum_{k=i}^n H_k \\
 & = \sum_{i=1}^n \frac{1}{i} ( \sum_{k=1}^n H_k - \sum_{k=1}^{i-1} H_k ) \\
 & = H_n [(n+1)H_{n+1}-(n+1)] - \sum_{i=1}^n \frac{1}{i}( iH_i-i ) \\
 & = H_n [(n+1)H_{n+1}-(n+1)] - (n+1)H_{n+1} + 2n + 1 \\
 & = (n+1)H_{n+1}^2 -(2n+3)H_{n+1} +2(n+1)
\end{split}
\end{equation}
\end{proof}

Using summation by parts (\ref{sumparts}) with $x_k=H_k$ and $y_k=k^pH_k$,
the following recursion formula results:
\begin{equation}
\begin{split}
 \sum_{k=0}^n k^p H_k^2 & = \frac{1}{p+1} \{ (n+1)^p [ (n+1)H_n^2-(n-1)H_n-\frac{n}{n+1} ] \\
 & - \sum_{k=0}^n [ (k+1)((k+1)^p-k^p)-pk^p ] H_k^2 \\
 & - \sum_{k=0}^n [ (k+1)^p - k((k+1)^p-k^p) ] H_k \\
 & + \sum_{k=0}^n k(k+1)^{p-1} \}
\end{split}
\end{equation}
For $p=0$ this formula yields (\ref{harmsumsqr}).
Letting $S(n,0)$ be (\ref{harmsumsqr}) and using $F(n,p,m)$ from (\ref{harmsumf}) 
evaluation of this formula results in the following recursion formula for integer $p>0$:
\begin{equation}\label{harmrecursqr}
\begin{split}
 \sum_{k=0}^n k^p H_k^2 & = S(n,p) \\
 & = \frac{1}{p+1} \{ (n+1)[(n+1)^p(H_{n+1}^2-H_{n+1})-H_{n+1}+1] \\
 & - \sum_{k=0}^{p-1} \binom{p}{k} [ ( 1 + \frac{k}{p-k+1} ) S(n,k) \\
 & + ( \frac{p-k}{k+1} - 1 ) F(n,k+1,1) \\
 & - \frac{p-k}{p} H_n^{(-k-1)} ] \}
\end{split}
\end{equation}
For a list of the resulting expressions up to $p=5$ see section (\ref{examples}) below.\\
The following harmonic number identities can be expressed as a sum of this type:
\begin{equation}\label{harmsqr2}
\begin{split}
 \sum_{k=0}^n k^p H_{n-k}^2 & = \sum_{k=0}^n (n-k)^p H_k^2 \\
 & = \sum_{k=0}^p (-1)^k \binom{p}{k} n^{p-k} S(n,k)
\end{split}
\end{equation}
For a list of the resulting expressions up to $p=5$ see section (\ref{examples}) below.

\section{Examples}\label{examples}

\begin{equation}
 \sum_{k=1}^n \frac{1}{k} H_k = \frac{1}{2} (H_n^2 + H_n^{(2)})
\end{equation}
\begin{equation}
 \sum_{k=0}^n \frac{1}{k+1} H_k = \frac{1}{2} (H_{n+1}^2 - H_{n+1}^{(2)})
\end{equation}
\begin{equation}
 \sum_{k=1}^n\frac{1}{k}H_{n-k} = H_n^2 - H_n^{(2)}
\end{equation}
\begin{equation}
 \sum_{k=0}^n \frac{1}{k+1} H_{n-k} = H_{n+1}^2 - H_{n+1}^{(2)}
\end{equation}
Formula (\ref{faulhaber}):
\begin{equation}
 H_n^{(0)} = n
\end{equation}
\begin{equation}
 H_n^{(-1)} = \frac{1}{2}n(n+1)
\end{equation}
\begin{equation}
 H_n^{(-2)} = \frac{1}{6}n(n+1)(2n+1)
\end{equation}
\begin{equation}
 H_n^{(-3)} = \frac{1}{4}n^2(n+1)^2
\end{equation}
\begin{equation}
 H_n^{(-4)} = \frac{1}{30}n(n+1)(2n+1)(3n^2+3n-1)
\end{equation}
\begin{equation}
 H_n^{(-5)} = \frac{1}{12}n^2(n+1)^2(2n^2+2n-1)
\end{equation}
Formula (\ref{harmrecurconv}):
\begin{equation}
 \sum_{k=0}^n H_k H_{n-k} = (n+1)[(H_{n+1}-1)^2 - H_{n+1}^{(2)} + 1]
\end{equation}
\begin{equation}
 \sum_{k=0}^n k H_k H_{n-k} = H_n^{(-1)}[(H_{n+1}-1)^2 - H_{n+1}^{(2)} + 1]
\end{equation}
\begin{equation}
\begin{split}
 \sum_{k=0}^n k^2 H_k H_{n-k} & = H_n^{(-2)}[H_{n+1}^2 - H_{n+1}^{(2)}]
  - \frac{1}{18}n(n+1)(13n+5) H_{n+1} \\
 & + \frac{1}{108}n(n+1)(71n+37)
\end{split}
\end{equation}
\begin{equation}
\begin{split}
 \sum_{k=0}^n k^3 H_k H_{n-k} & = H_n^{(-3)}[H_{n+1}^2 - H_{n+1}^{(2)}]
  - \frac{1}{12}n^2(n+1)(7n+5) H_{n+1} \\
 & + \frac{1}{72}n^2(n+1)(35n+37)
\end{split}
\end{equation}
\begin{equation}
\begin{split}
 \sum_{k=0}^n k^4 H_k H_{n-k} & = H_n^{(-4)}[H_{n+1}^2 - H_{n+1}^{(2)}] \\
 & - \frac{1}{900}n(n+1)(447n^3+468n^2+17n-32) H_{n+1} \\
 & + \frac{1}{54000}n(n+1)(20739n^3+33066n^2+4129n-3934)
\end{split}
\end{equation}
\begin{equation}
\begin{split}
 \sum_{k=0}^n k^5 H_k H_{n-k} & = H_n^{(-5)}[H_{n+1}^2 - H_{n+1}^{(2)}] \\
 & - \frac{1}{360}n^2(n+1)(157n^3+218n^2+17n-32) H_{n+1} \\
 & + \frac{1}{21600}n^2(n+1)(6839n^3+14566n^2+4129n-3934)
\end{split}
\end{equation}
Formula (\ref{harmrecursqr}):
\begin{equation}
 \sum_{k=0}^n H_k^2 = (n+1)H_{n+1}^2 - (2n+3)H_{n+1} + 2(n+1)
\end{equation}
\begin{equation}
 \sum_{k=0}^n k H_k^2 = H_n^{(-1)}H_{n+1}^2 - \frac{1}{2}(n^2+n-1)H_{n+1} + \frac{1}{4}(n+1)(n-2)
\end{equation}
\begin{equation}
 \sum_{k=0}^n k^2 H_k^2 = H_n^{(-2)}H_{n+1}^2 - \frac{1}{18}(4n^3+9n^2+5n+3)H_{n+1} + \frac{1}{108}(n+1)(8n^2+n+18)
\end{equation}
\begin{equation}
\begin{split}
 \sum_{k=0}^n k^3 H_k^2 & = H_n^{(-3)}H_{n+1}^2 - \frac{1}{24}n(n+1)(n+2)(3n+1)H_{n+1} \\
 & + \frac{1}{288}n(n+1)(9n^2+13n+14)
\end{split}
\end{equation}
\begin{equation}
\begin{split}
 \sum_{k=0}^n k^4 H_k^2 & = H_n^{(-4)}H_{n+1}^2 \\
 & - \frac{1}{900}(72n^5+315n^4+410n^3+135n^2-32n-30)H_{n+1} \\
 & + \frac{1}{54000}(n+1)(864n^4+2241n^3+2629n^2+1466n-1800)
\end{split}
\end{equation}
\begin{equation}
\begin{split}
 \sum_{k=0}^n k^5 H_k^2 & = H_n^{(-5)}H_{n+1}^2 \\
 & - \frac{1}{360}n(n+1)(n+2)(2n+1)(10n^2+19n-9)H_{n+1} \\
 & + \frac{1}{21600}n(n+1)(200n^4+736n^3+1159n^2+971n-366)
\end{split}
\end{equation}
Formula (\ref{harmsqr2}):
\begin{equation}
 \sum_{k=0}^n H_{n-k}^2 = (n+1)H_{n+1}^2 - (2n+3)H_{n+1} + 2(n+1)
\end{equation}
\begin{equation}
 \sum_{k=0}^n k H_{n-k}^2 = H_n^{(-1)}H_{n+1}^2 - \frac{1}{2}(3n^2+5n+1)H_{n+1} + \frac{1}{4}(n+1)(7n+2)
\end{equation}
\begin{equation}
\begin{split}
 \sum_{k=0}^n k^2 H_{n-k}^2 & = H_n^{(-2)}H_{n+1}^2 - \frac{1}{18}(2n+1)(11n^2+17n+3)H_{n+1} \\
 & + \frac{1}{108}(n+1)(170n^2+109n+18)
\end{split}
\end{equation}
\begin{equation}
\begin{split}
 \sum_{k=0}^n k^3 H_{n-k}^2 & = H_n^{(-3)}H_{n+1}^2 - \frac{1}{24}n(n+1)(25n^2+37n+10)H_{n+1} \\
 & + \frac{1}{288}n(n+1)(415n^2+427n+130)
\end{split}
\end{equation}
\begin{equation}
\begin{split}
 & \sum_{k=0}^n k^4 H_{n-k}^2 = H_n^{(-4)}H_{n+1}^2 \\
 & - \frac{1}{900}(2n+1)(411n^4+1002n^3+679n^2+28n-30)H_{n+1} \\
 & + \frac{1}{54000}(n+1)(72114n^4+103491n^3+46129n^2+1466n-1800)
\end{split}
\end{equation}
\begin{equation}
\begin{split}
 & \sum_{k=0}^n k^5 H_{n-k}^2 = H_n^{(-5)}H_{n+1}^2 \\
 & - \frac{1}{120}n(n+1)(98n^4+236n^3+159n^2+n-14)H_{n+1} \\
 & + \frac{1}{21600}n(n+1)(26978n^4+49996n^3+29599n^2+1961n-3234)
\end{split}
\end{equation}

\section{Computer Program}

The Mathematica$^{\textregistered}$ \cite{W03} program used to compute the expressions
is given below, which should be added to the program listed in \cite{K11}.

\begin{alltt}
HarmSumR[0]=(n+1)((HarmonicNumber[n+1]-1)^2-HarmonicNumber[n+1,2]+1);
HarmSumR[p_]:=HarmSumR[p]=Simplify[1/(p+1)
 (-n(n+1)^p HarmonicNumber[n+1]
 +Sum[Binomial[p,k]((n-k/(p-k+1))HarmSumR[k]
 +(1+(-1)^k(p-k)/p(n+1)^(p-k-1))HarmSumF[k+1,1]
 +(p-k)/p HarmFun[-k-1]),\{k,0,p-1\}])]
HarmonicSumR[p_]:=Module[\{t=HarmTable[2],u\},
 u=Factor[CoefficientArrays[HarmSumR[p],t]];
 u[[1]]+Dot[u[[2]],t]+Dot[Dot[u[[3]],t],t]]
HarmSumS[0]=(n+1)HarmonicNumber[n+1]^2-(2n+3)HarmonicNumber[n+1]
 +2(n+1);
HarmSumS[p_]:=HarmSumS[p]=Simplify[1/(p+1)
 ((n+1)((n+1)^p(HarmonicNumber[n+1]^2-HarmonicNumber[n+1])
  -HarmonicNumber[n+1]+1)
 -Sum[Binomial[p,k]((1+k/(p-k+1))HarmSumS[k]
 +((p-k)/(k+1)-1)HarmSumF[k+1,1]
 -(p-k)/p HarmFun[-k-1]),\{k,0,p-1\}])]
HarmonicSumS[p_]:=Module[\{t=HarmTable[1],u\},
 u=Factor[CoefficientArrays[HarmSumS[p],t]];
 u[[1]]+Dot[u[[2]],t]+Dot[Dot[u[[3]],t],t]]
HarmSumT[p_]:=Simplify[
 Sum[(-1)^k Binomial[p,k]n^(p-k)HarmSumS[k],\{k,0,p\}]]
HarmonicSumT[p_]:=Module[\{t=HarmTable[1],u\},
 u=Factor[CoefficientArrays[HarmSumT[p],t]];
 u[[1]]+Dot[u[[2]],t]+Dot[Dot[u[[3]],t],t]]

(* Compute some examples *)
HarmonicSumR[3]//TraditionalForm
HarmonicSumS[4]//TraditionalForm
HarmonicSumT[2]//TraditionalForm
\end{alltt}

\pdfbookmark[0]{References}{}

\end{document}